\definecolor{TUMblue}{HTML}{0065bd}
\newtheorem*{lemma*}{Lemma}
\newcommand{\mbb}{\mathbb}
\newcommand{\mc}{\mathcal}
\newcommand{\C}{{\mbb C}}
\newcommand{\R}{{\mbb R}}
\newcommand{\K}{{\mbb K}}
\newcommand{\Q}{{\mbb Q}}
\newcommand{\miniskip}{\kern 0.1em}
\begin{document}

\title{Computing Common Eigenvectors and Simultaneous Triangulation}
\author{Emanuel Malvetti}
\address{School of Natural Sciences, Technische Universit\"at M\"unchen, 85737 Garching, Germany, and Munich Centre for Quantum Science and Technology (MCQST) \& Munich Quantum Valley (MQV)}

\begin{abstract}
We propose an efficient algorithm for computing a common eigenvector of a finite set of square matrices. 
As an immediate consequence we obtain an algorithm for determining whether the matrices admit a simultaneous triangulation, and, if so, for computing a corresponding basis. \medskip

\noindent\textbf{Keywords.} Common eigenvector, simultaneous triangulation, Lie algebra, algorithm \smallskip

\noindent\textbf{MSC Codes.} 65F15, % Numerical linear algebra, Eigenvalues, eigenvectors
15A18, %Linear and multilinear algebra; matrix theory, Eigenvalues, singular values, and eigenvectors
47A15, %Operator theory, Invariant subspaces
17B30 %Lie algebras and Lie superalgebras, Solvable, nilpotent (super)algebras
\end{abstract}

\maketitle

\section{Introduction}

Given a finite set of matrices of size $n\times n$ with entries in an algebraically closed field (such as the complex numbers), our goal is to find a basis in which all matrices take on an (upper) triangular shape, or to conclude that no such basis exists.
The answer to this question is, for instance, relevant in determining stabilizability of certain control systems~\cite{Shorten98,LindbladReduced23}.
Many mathematical characterizations of the simultaneous triangulability of matrices are collected in~\cite{Radjavi00}, and of course the famous theorems of Lie and Engel give further conditions~\cite{Knapp02}.
It is beneficial to take a more abstract point of view.
%The matrices are simultaneously triangular if and only if the first $s$ basis vectors span a common invariant subspace of the matrices for all $0<s<n$.
The given matrices are simultaneously triangulable if and only if they admit a complete flag\miniskip\footnote{A \emph{flag} in a finite-dimensional vector space is a nested sequence of strict subspaces, and it is \emph{complete} if it contains a subspace of each dimension.} of common invariant subspaces.
An important result states that if a complete flag of common invariant subspaces exists, then every flag of common invariant subspaces is contained in a complete one, cf.~\cite[Lem.~1.5.2]{Radjavi00}.
This is, in fact, a special case of the Jordan--Hölder Theorem, which holds for arbitrary modules and groups~\cite{Hazewinkel04,Isaacs09}.
Consequently, in order to find a simultaneous triangulation, it suffices to repeatedly compute (proper, non-trivial) common invariant subspaces.
In fact it suffices to be able to compute one-dimensional invariant subspaces, which of course correspond to (the span of) a common eigenvector.
This will be the main focus of this paper.
Surprisingly the literature on finding common eigenvectors is rather slim.
Often only a ``brute-force'' algorithm is provided~\cite{Dubi09,AlDweik22a}, which however suffers from combinatorial explosion.
A seminal result is given by Shemesh~\cite{Shemesh84} yielding a way to determine the existence of and compute a common eigenvector of two matrices.
An extension to an arbitrary number of matrices is presented in~\cite{Jamiolkowski15}, which is however rather inefficient.
Our aim is to provide an efficient algorithm for computing a simultaneous eigenvector of an arbitrary family of matrices, and as a result we obtain an efficient algorithm to compute a simultaneous triangulation, (or to conclude that no such eigenvector or triangulation exists).

More generally, if a simultaneous triangulation does not exist, it might still be of interest to find the finest block triangulation, i.e.\ a maximal flag of common invariant subspaces.
For this, an algorithm for finding any common invariant subspace is sufficient for the same reason as above.
These problems have recently been considered in~\cite{AlDweik22a,AlDweik23b}.
Interestingly the algorithm given there requires an algorithm for finding a common eigenvector, so that our results also improve methods for the more general problem of finding simultaneous block triangulations.

We focus mostly on numerical linear algebra methods and computations, since our main field of interest is that of the complex numbers, but our results are formulated for general (algebraically closed) fields.
In computational group theory a similar task is that of testing a module over a finite-dimensional matrix algebra (e.g.~a group representation) over a finite field for irreducibility. 
The tool of choice is called the Meataxe algorithm~\cite{Holt05}, and implementations can be found in~\cite{MAGMA,GAP4}.
Numerical and computer algebra algorithms often have quite different applications and are difficult to compare, and we will refrain from doing so here.
However we will point out some caveats for dealing with arbitrary fields along the way.

\subsection*{A Note on Computational Complexity}

We formulate most results for an arbitrary field $\K$, when necessary restricting to algebraically closed fields.
When it comes to determining the (worst-case) time-complexity of our algorithms, we assume that all field operations take constant time.
When performing exact arithmetic, this is an accurate assumption for finite fields, but when working over the rationals $\Q$ this might break down.
For infinite fields like the reals $\R$ or complex numbers $\C$, it is often sufficient to implement the algorithms numerically, i.e.~using floating point arithmetic.
In particular when determining eigenvalues, numerical algorithms are preferable, and for most linear algebra tasks numerical algorithms perform well and are stable~\cite{Demmel07}.
Again it is accurate to assume that all field operations take constant time.
Note also that for the multiplication of $n\times n$ matrices we use the standard algorithm with complexity $\mc O(n^3)$, although better complexities can be achieved.

%\begin{comment}

\section{Computing a Common Eigenvector}

Consider a list $A_1,\ldots,A_k$ of $n\times n$ matrices over an algebraically closed field $\mbb K$. 
Our goal is to compute a simultaneous eigenvector of these matrices, if it exists. 
More precisely, we are looking for a vector $v\in\mbb K^n$ such that there are numbers $\lambda_i\in\mbb K$ satisfying $A_i v = \lambda_i v$ for all $i\in\{1,\ldots,k\}$.
Of course, if such numbers $\lambda_i$ are known, it is easy to find $v$ as the solution of a linear system of equations. 
However, even if we know all eigenvalues of all $A_i$, there are up to $n^k$ combinations to try.
This brute-force algorithm, given in~\cite{Dubi09,AlDweik22a}, becomes very inefficient when $k$ is large.
We are looking for an algorithm that has polynomial runtime in $n$ and $k$.

\subsection*{Algorithm for the Commuting Case} 

Let us first consider the special case of commuting matrices $A_1,\ldots,A_k$, that is, $[A_i,A_j]:=A_iA_j-A_jA_i=0$ for all $i,j\in\{1,\ldots,k\}$. 
%The key property used here is that commuting matrices preserve each others eigenspaces
%Note that since $\mbb K$ is algebraically closed, there always exists at least one eigenvalue and corresponding eigenspace.
To find a common eigenvector, we first compute the eigenspaces of $A_1$ and denote the smallest of them by $S_1$. 
Note that since $\mbb K$ is algebraically closed, there always exists a non-trivial eigenspace.
Moreover, as all $A_i$ commute, $S_1$ is a common invariant subspace.\miniskip\footnote{Indeed, if $[A,B]=0$ and $Av=\lambda v$, then $A(Bv)=BAv=\lambda (Bv)$.}
Hence all matrices $A_i$ can be restricted to $S_1$, and the restrictions still commute.
Now consider the restriction $A_2|_{S_1}$ and let $S_2$ be its smallest eigenspace. 
Then $S_2$ is again a common invariant subspace of all $A_i$. 
Continuing in this fashion we obtain a nested sequence $\mbb K^n\supseteq S_1\supseteq S_2\supseteq\ldots\supseteq S_k$ of common invariant subspaces. 
If some $S_i$ is one dimensional, then any non-zero vector in this $S_i$ is a simultaneous eigenvector of all $A_j$, otherwise any non-zero vector in $S_k$ is a simultaneous eigenvector of all $A_i$. 

Assuming $\mbb K=\C$, an approximate numerical algorithm for this task can be implemented with time-complexity $\mc O(n^3)$, since the Schur decomposition can be performed in $\mc O(n^3)$ (e.g.~using the QR algorithm, cf.~\cite{Francis62,Arbenz16}), and in each iteration the size of the subspace is at least halved (unless some restricted matrix is a multiple of the identity, but this can be checked quickly without affecting the runtime). 
Over general fields one would first find a root of the characteristic polynomial (possibly over a field extension if the field is not algebraically closed) and then compute the corresponding eigenspace. 
Such algorithms exist for many different kinds of fields, see for instance~\cite[Sec.~24.8.1]{MAGMA-Handbook}.

\medskip

With this special case solved, we can consider the general case of arbitrary matrices $A_i$. Here the idea is to find a common invariant subspace on which all $A_i$ commute.

\begin{lemma*} \label{lemma:abel-submod}
Let $A_1,\ldots,A_k$ of be a list of $n\times n$ matrices over any field $\mathbb K$ and let
\begin{align*}
T=\bigcap_{i,j=1}^k\ker[A_i,A_j].
\end{align*}
Then it holds that 
\begin{enumerate}[(i)]
\item \label{it:ev-in-T} every common eigenvector of all $A_i$ lies in $T$, and
\item \label{it:T-invariant} if the $A_i$ span a %Lie subalgebra of $\mf{gl}(n,\mbb K)$
Lie algebra\miniskip\footnote{Concretely this means that every commutator can be written as a linear combination: $[A_i,A_j]=\sum_{l=1}^k c_{ij}^l A_l$.}, then $T$ is a common invariant subspace of all $A_i$.
\end{enumerate}
\end{lemma*}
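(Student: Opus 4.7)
\medskip
\noindent\textbf{Proof proposal.} Both parts reduce to direct commutator manipulations.

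For \eqref{it:ev-in-T}, I would simply verify that a common eigenvector kills every commutator. If $A_i v = \lambda_i v$ for all $i$, then
\begin{align*}
[A_i, A_j] v = A_i A_j v - A_j A_i v = \lambda_j \lambda_i v - \lambda_i \lambda_j v = 0,
\end{align*}
so $v \in \ker[A_i, A_j]$ for every pair $(i,j)$, i.e.\ $v \in T$. This part is essentially free.

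For \eqref{it:T-invariant}, the plan is to show that for $v \in T$ and any index $i$, the vector $A_i v$ still lies in every $\ker[A_j, A_l]$. The key identity is the operator rewriting
\begin{align*}
[A_j, A_l] A_i = A_i [A_j, A_l] + \bigl[[A_j, A_l], A_i\bigr],
\end{align*}
so that, applied to $v \in T$, the first term vanishes because $[A_j, A_l] v = 0$, leaving us to show that $\bigl[[A_j, A_l], A_i\bigr] v = 0$. This is exactly where the Lie-algebra hypothesis enters: writing $[A_j, A_l] = \sum_{m=1}^k c_{jl}^m A_m$ and using bilinearity,
\begin{align*}
\bigl[[A_j, A_l], A_i\bigr] v = \sum_{m=1}^k c_{jl}^m \,[A_m, A_i] v = 0,
\end{align*}
since each $[A_m, A_i]$ annihilates $v$. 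Combining the two gives $[A_j, A_l](A_i v) = 0$ for all $j,l$, hence $A_i v \in T$.

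The only subtle point is \eqref{it:T-invariant}: without the Lie-algebra closure, there is no reason for the double commutator $[[A_j, A_l], A_i]$ to send $T$ into itself, and the argument collapses. The span condition is precisely what lets us reduce a second-order commutator back to the generating commutators that define $T$. Everything else is routine.
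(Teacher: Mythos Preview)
Your proof is correct and follows essentially the same approach as the paper: part~\ref{it:ev-in-T} is identical, and for part~\ref{it:T-invariant} both you and the paper expand $[A_j,A_l]$ via the structure constants and use $v\in T$ to kill the resulting single commutators. The only cosmetic difference is that you first isolate the double commutator via $[A_j,A_l]A_i = A_i[A_j,A_l] + [[A_j,A_l],A_i]$, whereas the paper expands $[A_i,A_j]A_s v=\sum_l c_{ij}^l A_l A_s v$ directly and commutes $A_l$ past $A_s$ using $[A_l,A_s]v=0$; the underlying idea is the same.
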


\begin{proof}
\ref{it:ev-in-T}: Let $v$ be a common eigenvector of all $A_i$ with $A_i v = \lambda_i v$, then $[A_i,A_j]v=(\lambda_i\lambda_j-\lambda_j\lambda_i)v=0$, and hence $v\in T$. 
\ref{it:T-invariant}: Now assume that the $A_i$ span a Lie algebra, meaning that there are constants $c_{ij}^l\in\mbb K$ such that $[A_i,A_j]=\sum_{l=1}^k c_{ij}^l A_l$. It is clear that $T$ is a subspace, hence it remains to show that it is invariant under all $A_s$. But this holds since for $v\in T$, the computation
\begin{align*}
[A_i,A_j]A_s v=\sum_{l=1}^k c_{ij}^l A_l A_s v= A_s \sum_{l=1}^k c_{ij}^l A_l v=A_s[A_i,A_j]v=0
\end{align*}
shows that $A_s v\in T$. 
This concludes the proof.
\end{proof}
The Lemma is similar to the well-known Shemesh criterion for the existence of a common eigenvector of two (complex) matrices, see~\cite[Thm.~3.1]{Shemesh84}, and in general it is more efficient than the extension provided in~\cite[Thm.~1.2]{Jamiolkowski15}, which has time-complexity exponential in $k$.

\subsection*{Algorithm for the General Case} 

Using this result we can formulate an algorithm for the common eigenvector problem. 
We will assume that the matrices $A_1,\ldots,A_k$ are linearly independent, since otherwise we can simply remove the offending elements.
Now extend the list $A_1,\ldots,A_k$ by appending matrices $A_{k+1},\ldots,A_{d}$ such that the full list forms a basis of the $d$-dimensional Lie algebra generated by $A_1,\ldots,A_k$.
Importantly removing linearly dependent elements does not change the generated Lie algebra, and the common eigenvectors of the matrices $A_1,\ldots,A_k$ are exactly the same as those of the enlarged set $A_1,\ldots,A_d$.
Now compute the space $T=\bigcap_{i,j=1}^{d}\ker[A_i,A_j]$. 
Since, by part~\ref{it:T-invariant} of the Lemma, $T$ is invariant under all $A_i$ with $i\in\{1,\ldots,d\}$, we can restrict them to $T$, and it holds that $[A_i|_T,A_j|_T]=0$ for all $i,j\in\{1,\ldots,d\}$. 
Then using the algorithm for the commutative case, we can find a common eigenvector of all $A_i|_T$ for $i\in\{1,\ldots,k\}$ on $T$, and this will be the desired eigenvector. 
Note that this fails if and only if $T$ is trivial, which happens only if there is no common eigenvector by part~\ref{it:ev-in-T} of the Lemma.

Computing of order $d^2$ commutators takes time $\mc O(d^2n^3)$.
Similarly, the subspace $T$ can be found by determining the kernel of the corresponding matrix of size $n\tfrac{d(d-1)}{2} \times n$, (here we used the anti-symmetry of the commutator) which can be done in the same time-complexity.
The time-complexity of generating the Lie algebra is given in App.~\ref{app:gen-lie-alg}, and that of finding a common eigenvector on $T$ was given above (and doesn't influence the overall result).
Thus, the total complexity is $\mc O(dn^3(kn+d))$, which in terms of $n$ can be bounded by $\mc O(n^8)$ since $k,d\leq n^2$. % \mc O(dkn^4 + d^2 n^3)=

\subsection*{Algorithm for Simultaneous Triangulation} 

As an application we will consider the problem of simultaneously triangulating an arbitrary set of $n\times n$ matrices $A_1,\ldots,A_k$. We consider the following naive algorithm. First compute a common eigenvector of all $A_i$, and call it $v_1$. Change to a basis whose first vector is $v_1$ and restrict to the $(n-1)\times(n-1)$ block in the lower right. Iterate the procedure until the matrices are in triangular form. It turns out that this algorithm actually works whenever the $A_i$ are simultaneously triangulable. This follows from~\cite[Lem.~1.5.2]{Radjavi00} which states that if a collection of matrices is triangulable, then every flag of invariant subspaces is contained in a complete flag of invariant subspaces. 
More generally, this is a direct consequence of the Jordan--H{\"o}lder Theorem for modules, cf.~\cite[Thm.~3.2.1]{Hazewinkel04}.
See also~\cite{Dubi09}. %~\cite[Thm.~10.5]{Isaacs09}. 

It is clear from the above that the total time-complexity of the algorithm lies in $\mc O(n^7(k+n))$.

\section*{Acknowledgements}

I thank R.~Zeier and T.~Schulte-Herbrüggen for their valuable feedback.
The work was funded i.a.\ by the Excellence Network of Bavaria under ExQM, by {\it Munich Quantum Valley} of the Bavarian State Government with funds from Hightech Agenda {\it Bayern Plus}.

\appendix

\section{Generating a Lie Algebra} \label{app:gen-lie-alg}

A key step in our algorithm for computing a common eigenvector of a given set of matrices is that of determining a basis for the Lie algebra generated by the matrices. 
Algorithms for this task have been presented in~\cite{BW79,Isidori95,SchiFuSol01,Elliott09} in the context of control theory, but unfortunately no complexity analysis is provided.
Here we give a simple meta-algorithm which includes the standard approach and we analyze its complexity.

In this section $\K$ denotes any field. 
If $\K$ is $\R$ or $\C$, the algorithm can also be implemented numerically, i.e.~using floating point arithmetic.
However, care has to be taken here, since, for instance, two generic real matrices will generate all of $\R^{n\times n}$, see ~\cite[Sec.~1]{BW79}.

\subsection*{Meta-algorithm for Generating a Lie Algebra}

Given a linearly independent set of matrices $\mc A=\{A_1,\ldots,A_k\}$ in $\mbb K^{n\times n}$, we denote the Lie algebra generated by $\mc A$ with $\langle \mc A\rangle_{\sf Lie}$ and its dimension with $d$.
The meta-algorithm proceeds as follows:

Set $\mc A_1=\mc A$ and $i=1$.
Pick two elements $B_i,B_i'\in\mc A_i$ and compute the commutator $[B_i,B_i']$.
If the commutator does not lie in $\operatorname{span}(\mc A_i)$, then set $\mc A_{i+1}=\mc A_i \cup \{[B_i,B_i']\}$, otherwise set $\mc A_{i+1}=\mc A_i$. Increment $i$ and repeat.
Terminate the algorithm when all commutators are guaranteed to lie in the span of $\mc A_i$.

To turn this into a concrete algorithm, a procedure for choosing $B_i,B_i'$ and a termination criterion have to be provided.
Below we will show how to check for linear independence such that that each iteration can be implemented with time-complexity $\mc O(n^4)$.
The time-complexity of the entire algorithm is hence determined by the number of iterations. 
Clearly the algorithm needs at least $d-k$ iterations, but it might take much longer. 
Note that of course $d\leq n^2$.

The most naive algorithm would try all commutators in $\mc A_i$, and terminate if none of them yield a new dimension. 
This leads to a bound of $\mc O(d^3)$ on the number of iterations and a total time-complexity of $\mc O(d^3n^4)$.
By simply not computing commutators which have been computed before, and using the anti-symmetry of the commutator, one can bound the number of iterations by $\frac{d(d-1)}{2}$, and hence the total time-complexity is then $\mc O(d^2n^4)$, and thus $\mc O(n^8)$ in term of $n$.
Note that this recovers the complexity given in~\cite{ZS11}, although the algorithm is slightly different.
The algorithm presented in~\cite{Elliott09}, based on the realization that one only needs to commute new elements with the initial matrices in $\mc A$, yields a bound of $\mc O(dk)$ on the number of iterations and a total time-complexity of $\mc O(dkn^4)$. 
In terms of $n$ alone we again get a bound of $\mc O(n^8)$.

\subsection*{Update Algorithm for Linear Independence}

Consider a matrix $M\in\mbb K^{r\times s}$ with $s\leq r$. 
Determining if the columns of this matrix are linearly independent can be done using Gaussian elimination in time-complexity $\mc O(rs^2)$.
In our Lie algebra generation algorithm above however we find ourselves in the following scenario.
We obtain one vector in $\mbb K^r$ after another.
We keep the first non-zero vector and discard any zero vectors.
Then for every following vector we keep it only if it is independent from all previously kept ones, otherwise we discard it.
Using again Gaussian elimination, this can be implemented in $\mc O(r^2)$ for each vector. The total runtime for $s$ vectors will be slightly worse with $\mc O(r^2s)$, but now we can give a result after each vector.

The algorithm proceeds as follows:
For the first non-zero vector perform Gaussian elimination to obtain the first standard basis vector and store the row operations performed in a matrix $R_1$.
For every following vector, continue the Gaussian elimination, while updating also the matrix $R_i$ storing the row operations.
If we detect linear dependence, discard the vector and revert the matrix $R_i$ to its previous state.

%\printbibliography
\bibliographystyle{acm}
\bibliography{./general.bib}

\end{document}